\newtheorem{thm}{Theorem}[section]
\newtheorem{theorem}[thm]{Theorem}
\newtheorem{cor}[thm]{Corollary}
\newtheorem{lem}[thm]{Lemma}
\theoremstyle{definition}
\newtheorem{remark}[thm]{Remark}
\numberwithin{equation}{section}
\newcommand{\R}{{\mathbb{R}}}
\newcommand{\N}{{\mathbb{N}}}
\newcommand{\pa}{\partial}
\newcommand{\cS}{\mathcal{S}}
\newcommand{\cC}{\mathcal{C}}
\newcommand{\cO}{\mathcal{O}}
\DeclareMathOperator{\loc}{loc}
\begin{document}

%

\title[Saddle-shaped positive solutions for elliptic systems with bistable nonlinearity]{Saddle-shaped positive solutions for elliptic systems with bistable nonlinearity}

\author{Nicola Soave}


\address{
\hbox{\parbox{5.7in}{\medskip\noindent
Nicola Soave\\
Dipartimento di Matematica, Politecnico di Milano, \\
Piazza Leonardo da Vinci, 32, 20133 Milano (Italy) \\[2pt]
{\em{E-mail address: }}{\tt nicola.soave@gmail.com, nicola.soave@polimi.it}}}}

\keywords{Elliptic systems; entire solutions; saddle solutions.}
%
%
%
\thanks{\emph{Acknowledgements:} the author is partially supported by the INDAM-GNAMPA group.}

\begin{abstract}
In this paper we prove the existence of infinitely many saddle-shaped positive solutions for non-cooperative nonlinear elliptic systems with bistable nonlinearities in the phase-separation regime. As an example, we prove that the system 
\[
\begin{cases}
-\Delta u  =u-u^3-\Lambda uv^2 \\
-\Delta v  =v-v^3-\Lambda u^2v \\
u,v > 0 
\end{cases} \qquad \text{in $\R^N$, with $\Lambda>1$,}
\]
has infinitely many saddle-shape solutions in dimension $2$ or higher. This is in sharp contrast with the case $\Lambda \in (0,1]$, for which, on the contrary, only constant solutions exist.
\end{abstract}

\maketitle


\maketitle

\section{introduction}

This paper concerns existence of multiple positive solutions for certain non-cooperative nonlinear elliptic systems with bistable nonlinearities, whose prototype is 
\begin{equation}\label{model}
\begin{cases}
-\Delta u  =u-u^3-\Lambda uv^2 \\
-\Delta v  =v-v^3-\Lambda u^2v \\
u,v > 0 
\end{cases} \qquad \text{in $\R^N$, with $\Lambda>1$.}
\end{equation}
This system arises in the study of domain walls and interface layers for 
two-components Bose-Einstein condensates \cite{ABCP}. Domain walls solutions satisfying asymptotic conditions 
\begin{equation}\label{cond infty}
\begin{cases}(u,v) \to (1,0) & \text{as $x_N \to +\infty$},\\
(u,v) \to (0,1) & \text{as $x_N \to -\infty$},\end{cases},
\end{equation}
in dimension $N=1$ have been carefully studied in \cite{AS, ABCP}, where in particular it is shown the existence of such a solution for every $\Lambda > 1$ \cite{ABCP}, and its uniqueness in the class of solutions with one monotone component \cite{AS}. In fact, uniqueness holds also without such assumption, and even in higher dimension \cite{FSS}; precisely, in \cite{FSS} it is shown that a solution to \eqref{model}-\eqref{cond infty} (with the  limits being uniform in $x'\in \R^{N-1}$) in $\R^N$ with $\Lambda>1$ is necessarily montone in both the components with respect to $x_N$, and $1$-dimensional. The assumption $\Lambda>1$ is natural, since \eqref{model}-\eqref{cond infty} has no solution at all when $\Lambda \in (0,1]$. Indeed, it is proved that \eqref{model} has only constant solutions for both $\Lambda \in (0,1)$ \cite{FSS}, and $\Lambda =1$ \cite{FSS, S}. 

We also refer to \cite{AfFa,AfSo} for recent results regarding a system obtained from \eqref{model} adding in each equation an additional term representing the spin coupling. 

To sum up, up to now it is known that \eqref{model} has only constant solutions for $\Lambda \in (0,1]$, and at least one $1$-dimensional non-constant solution for $\Lambda >1$. Moreover, solutions with uniform as in \eqref{cond infty} are necessarily $1$-dimensional, and unique modulo translations. In this paper we prove the existence of infinitely many geometrically distinct solutions to \eqref{model} in any dimension $N \ge 2$, for any $\Lambda>1$. This result enlightens once more the dichotomy $\Lambda \in (0,1]$ vs. $\Lambda>1$. While for $\Lambda \in (0,1]$ problem \eqref{model} is rigid in itself, and only possesses constant solutions, for $\Lambda>1$ we have multiplicity of non-constant solutions, and rigidity results can be recovered only with some extra assumption, such as \eqref{cond infty}. 

Our result is based upon variational methods, and strongly exploits the symmetry of the problem. Roughly speaking, we shall construct solutions to \eqref{model} such that $u-v$ ``looks like" a sing changing solution of the Allen-Cahn equation $-\Delta w = w-w^3$, with $u \simeq w^+$, and $v \simeq w^-$. The building blocks $w$ in our construction will be both the saddle-type planar solutions (also called ``pizza solutions") \cite{AlCaMo}, and the saddle solutions in $\R^{2m}$ \cite{CaTe, DaFiPe}.

\subsection{Statement of the main results}

We consider the following general version of \eqref{model}:
\begin{equation}\label{system}
\begin{cases}
-\Delta u =f(u)-\Lambda u^p v^{p+1}& \text{in}\quad\mathbb{R}^N  \\
-\Delta v  =f(v)-\Lambda u^{p+1}v^p & \text{in}\quad\mathbb{R}^N  \\
u,v > 0 &  \text{in}\quad\mathbb{R}^N, 
\end{cases}\qquad \text{with $\Lambda > 0$,}
\end{equation}
where $N \ge 2$, $p \ge 1$, and $f$ is of bistable type; more precisely, let $f: \R \to \R$ be a locally Lipschitz continuous and odd nonlinearity. For a value $M>0$, we define the potential
\[
F(t) = \int_t^M f(s)\,ds,
\]
so that $F \in C^{1,1}(\R)$, and $F' = - f$. We suppose that:
\begin{equation}\label{hp bistable}
F \ge 0 = F(\pm M) \quad \text{in $\R$}, \quad \text{and} \quad F>0 \quad \text{in $(-M,M)$}.
\end{equation}
Note that in this case $f(0) = f(\pm M) = 0$. $F$ is often called a double well potential, and $f$ is called bistable nonlinearity. A simple example is $f(t) =t-t^3$. 

With the above notation, we introduce 
\[
W(s,t) = F(s) + F(t) + \frac{\Lambda}{p+1} |s|^{p+1} |t|^{p+1}, \qquad (s,t) \in \R^2.
\]

The first of our main result concerns the existence of infinitely many geometrically distinct solutions for problem \eqref{system} in the plane. We consider polar coordinates $(r,\theta) \in [0,+\infty) \times [0,2\pi)$ in the plane. For any positive integer $k$, we define:
\begin{itemize}
\item[] $R_k$, the rotation of angle $\pi/k$ in counterclockwise sense;
\item[] $R_k^i$, the rotation of angle $i \pi/k$ in counterclockwise sense, with $i=1,\dots, 2k$;
\item[] $\ell_0$, the line of equation $x_2= \tan(\pi/2k) x_1$ in $\R^2$;
\item[] $\ell_i$, the line $R_k^i(\ell_1)$, $i=1,\dots,k-1$;
\item[] $T_i$, the reflection with respect to $\ell_i$;
\item[] $\alpha_k = \tan(\pi/(2k))$;
\item[] $\cS_k$, the open circular sector $\{-\pi/(2k)<\theta<\pi/(2k)\} = \{ \alpha_k x_1 > |x_2|\} \subset \R^2$.
\end{itemize}
\begin{theorem}[Saddle-type solutions in the plane]\label{thm: ex 2d}
Let $p \ge 1$, $f \in C^{0,1}(\R)$ be odd, and suppose that its primitive $F$ satisfies \eqref{hp bistable}. Suppose moreover that:
\begin{equation}\label{cond on critical}
\inf_{s \in [0,M]} W(s,s) > F(0). 
\end{equation}
Then, for every positive integer $k$, there exists a positive solution $(u_k,v_k)$ to system \eqref{system} in $\R^2$ having the following properties:
\begin{itemize}
\item[($i$)] $0 < u_k,v_k<M$ in $\R^2$; 
\item[($ii$)] $v_k  = u_k \circ T_i$ for every $i=1,\dots,k$, and $u_k(x_1,-x_2) = u_k(x_1,x_2)$ in $\R^2$; 
\item[($iii$)] $u_k-v_k>0$ in $\cS_k$. 
\end{itemize}
\end{theorem}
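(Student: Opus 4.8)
The plan is to construct $(u_k,v_k)$ variationally by minimizing the natural energy functional associated to \eqref{system} over a symmetric class of functions adapted to the $k$-fold symmetry, and then to show that the minimizer is nontrivial, positive, and has the desired ordering. Concretely, I would work on the circular sector $\cS_k$ with mixed boundary conditions: on the two rays bounding $\cS_k$ (the lines $\ell_0$ and $R_k(\ell_0)$, i.e.\ $\theta=\pm\pi/(2k)$) one imposes the ``reflection-with-swap'' condition that on $\theta = \pi/(2k)$ the pair satisfies $v = u$ (so that reflecting across $\ell_i$ and swapping components extends the solution), while the $x_2\mapsto -x_2$ evenness of $u_k$ handles the ray $\theta = -\pi/(2k)$. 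The energy is
\[
E(u,v) = \int_{\cS_k} \left( \frac12|\nabla u|^2 + \frac12|\nabla v|^2 + W(u,v) \right),
\]
minimized over $H^1_{\loc}$ pairs with $0\le u,v\le M$, with the constraint $v=u$ on $\{\theta=\pi/(2k)\}$ and $u$ even in $x_2$; here one must be careful that the domain is unbounded, so one should either minimize over a sequence of truncated sectors $\cS_k\cap B_R$ with suitable boundary data on $\partial B_R$ and pass to the limit, or work directly with a renormalized/localized energy as is standard for Allen--Cahn-type entire solutions. The condition \eqref{cond on critical} is precisely what rules out the trivial diagonal competitor $u=v=s$: any such constant configuration has energy density $W(s,s)$, which by \eqref{cond on critical} strictly exceeds $F(0)=W(0,0)$-type baseline relevant to the segregated profile, so the minimizer cannot be on the diagonal and must segregate, giving $u_k\neq v_k$.

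The key steps, in order, would be: (1) Set up the constrained minimization on truncated sectors, establish existence of minimizers via the direct method (coercivity from the gradient term, lower semicontinuity, the box constraint $0\le u,v\le M$ preserved by truncation since $f(0)=f(\pm M)=0$ makes $0$ and $M$ ordered barriers), and derive the Euler--Lagrange system, checking that the mixed boundary conditions are natural/compatible with the $k$-fold reflection-swap group generated by the $T_i$'s. (2) Pass to the limit $R\to\infty$ using interior elliptic estimates and a diagonal argument to obtain an entire solution $(u_k,v_k)$ on $\cS_k$, then extend it to all of $\R^2$ by successive reflections across $\ell_1,\dots,\ell_{k-1}$ combined with component swaps, and across $\{x_2=0\}$; one checks the extension is $C^2$ and solves \eqref{system} globally (the reflection-swap is an isometry under which the system is equivariant because swapping $u\leftrightarrow v$ swaps the two equations and the nonlinear coupling term is symmetric). (3) Prove strict positivity $0<u_k,v_k<M$: the upper bound from the maximum principle as above, and $u_k,v_k>0$ from the strong maximum principle applied to each equation (one must first exclude $u_k\equiv 0$, which would force $v_k$ to solve a scalar Allen--Cahn problem whose energy, compared against a genuinely two-component competitor, is not minimal — or more simply note the constraint and the symmetry force both components to be positive somewhere, then propagate). (4) Prove the ordering $u_k-v_k>0$ in $\cS_k$: set $w=u_k-v_k$; by construction $w=0$ on $\partial\cS_k$ and $w$ is even in $x_2$ inside, and $w$ satisfies $-\Delta w = f(u_k)-f(v_k) - \Lambda(u_k^pv_k^{p+1}-u_k^{p+1}v_k^p) = f(u_k)-f(v_k)+\Lambda u_k^pv_k^p(u_k-v_k)$, which since $f$ is (locally) Lipschitz can be written as $-\Delta w = c(x)w$ with $c\in L^\infty$; then $w\ge 0$ on $\cS_k$ (because swapping $u_k,v_k$ gives another minimizer, and a minimizer can be taken with $u_k\ge v_k$ on the sector by a rearrangement/cut-and-paste argument), and $w\not\equiv 0$ (else the solution is on the diagonal, contradicting \eqref{cond on critical}), so the strong maximum principle yields $w>0$ in the interior.

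The main obstacle I anticipate is Step (4) combined with the non-triviality argument: making rigorous that the minimizer segregates (i.e.\ $u_k\not\equiv v_k$) requires exhibiting an explicit competitor whose energy beats every diagonal configuration, and this competitor should be built from the sign-changing ``pizza'' Allen--Cahn solution $w$ — roughly $u_k \approx w^+$, $v_k\approx w^-$ — and one must verify that \eqref{cond on critical} guarantees a strict energy gain; the interplay between the phase-separation parameter $\Lambda$ (through $W$), the well depth, and the geometry of the sector is delicate. A secondary difficulty is the unbounded domain: one needs uniform (in $R$) energy or density bounds to ensure the limit solution is nontrivial and not merely a constant, typically via a careful choice of boundary data on $\partial B_R$ mimicking the pizza solution's asymptotics and a monotonicity-formula or clearing-out argument. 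The reflection-extension bookkeeping (checking that all the $T_i$-reflections are mutually consistent and generate a solution on the whole plane with property ($ii$)) is routine but must be done carefully to confirm that the boundary conditions chosen on $\cS_k$ are exactly the ones making the extension smooth.
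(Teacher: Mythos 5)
Your overall strategy --- minimize the energy over a symmetric class, use \eqref{cond on critical} to rule out the diagonal profile, pass to the limit on balls, and invoke the strong maximum principle for strict positivity and ordering --- is exactly the one used in the paper. However, there are two points where your sketch leaves a genuine gap that the paper fills with specific devices.

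First, and most importantly, you gloss over how to show that the constrained minimizer actually solves the Euler--Lagrange system. The minimization is carried out in a class that includes the box constraint $0\le u,v\le M$ (and, in the paper, also the ordering constraint $u\ge v$ on the sector), and such a minimizer a priori only satisfies a variational inequality, not the PDE. The paper's key technical tool here is the \emph{auxiliary parabolic flow} (Lemma~\ref{lem: existence in balls}, following~\cite{BTWW}): one shows that the admissible set $A_R$ is positively invariant under the gradient flow of $J$, that the flow dissipates energy, and that starting the flow at the minimizer forces $U_t\equiv V_t\equiv 0$, so the minimizer is a stationary --- hence classical --- solution. Saying that $0$ and $M$ are ``ordered barriers'' and then ``deriving the Euler--Lagrange system'' skips precisely this step; without it (or an equivalent variational-inequality/slackness argument, which you do not supply), the chain from minimizer to solution is broken. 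Relatedly, your plan to obtain $u\ge v$ post hoc via rearrangement rather than as a constraint can be made to work, but the naive global swap $(\max\{u,v\},\min\{u,v\})$ destroys the equivariance $v=u\circ T_i$; the rearrangement must be performed sector-by-sector and then re-extended, and one must still argue the rearranged minimizer is a solution, so the same gap reappears.

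Second, your competitor. You propose building the competitor (and the boundary data on $\partial B_R$) from the Alessio--Calamai--Montecchiari ``pizza'' solution. This would make the proof depend on their existence theorem, which as the paper notes holds under slightly stronger hypotheses on $F$, and would in any case be circular if one also wants (as the paper does, in the appendix) to reprove that result by this method. The paper instead uses the explicit piecewise-linear function
\[
w_k=\min\Big\{M,\tfrac{\alpha_k x_1-|x_2|}{\sqrt2}\Big\}
\]
extended by odd reflections, both as boundary data and as the building block for the cut-off competitor $(\varphi_R,\psi_R)=(\xi w_k^++(1-\xi)u_R,\ \xi w_k^-+(1-\xi)v_R)$. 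This gives directly the linear-in-$\rho$ estimate $J((u_R,v_R),\cS_\rho)\le C\rho+F(0)|\cS_{\rho-1}|$, which is exactly what is needed to beat the quadratic lower bound $\min_{[0,M]}W(s,s)\,|\cS_\rho|$ coming from a hypothetical diagonal limit $u\equiv v$. You correctly identify that \eqref{cond on critical} is the decisive hypothesis, but the mechanism is the comparison of these two area-scalings, not merely a pointwise energy-density comparison, and the explicit competitor is what makes the upper bound elementary and self-contained. Filling in these two points would bring your proposal in line with the paper's argument.
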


Notice in particular that $\{u_k-v_k=0\} = \bigcup_{i=0}^{k-1} \ell_i$, which implies that $(u_k,v_k) \neq (u_j,v_j)$ if $j \neq k$. Regarding assumption \eqref{cond on critical}, we stress that for any bistable $f$ it is satisfied provided that $\Lambda>0$ is sufficiently large, and can be explicitly checked in several concrete situations. In particular:

\begin{cor}
For every $\Lambda>1$, problem \eqref{model}
has infinitely many geometrically distinct non-constant solutions.
\end{cor}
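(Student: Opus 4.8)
The plan is to deduce the Corollary from Theorem~\ref{thm: ex 2d} by first verifying that the prototype nonlinearity $f(t) = t - t^3$ satisfies all the hypotheses, and then extending the planar solutions trivially to $\R^N$. I would begin by recording that $f(t) = t-t^3$ is odd and locally Lipschitz, that choosing $M = 1$ gives $F(t) = \int_t^1 (s-s^3)\,ds = \tfrac14(1-t^2)^2 \ge 0$ with equality exactly at $t = \pm 1$, so that the bistability hypothesis \eqref{hp bistable} holds, and that here $p = 1$ and $F(0) = \tfrac14$.

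The crux is checking the structural condition \eqref{cond on critical}: with $p=1$ we have $W(s,s) = 2F(s) + \tfrac{\Lambda}{2}s^4 = \tfrac12(1-s^2)^2 + \tfrac{\Lambda}{2}s^4$ for $s \in [0,1]$. I would compute $\inf_{s\in[0,1]} W(s,s)$ explicitly: setting $\sigma = s^2 \in [0,1]$, one minimizes $g(\sigma) = \tfrac12(1-\sigma)^2 + \tfrac{\Lambda}{2}\sigma^2$, a strictly convex quadratic with minimum at $\sigma_* = \tfrac{1}{1+\Lambda} \in (0,1)$, giving $g(\sigma_*) = \tfrac12\cdot\tfrac{\Lambda}{1+\Lambda}$. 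Hence \eqref{cond on critical} reads $\tfrac{\Lambda}{2(1+\Lambda)} > \tfrac14$, i.e. $2\Lambda > 1+\Lambda$, i.e. $\Lambda > 1$ — exactly the hypothesis of the Corollary. So Theorem~\ref{thm: ex 2d} applies and yields, for each positive integer $k$, a solution $(u_k,v_k)$ of \eqref{model} in $\R^2$.

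To pass to $\R^N$ with $N \ge 2$, I would extend each planar solution by setting $\tilde u_k(x_1,\dots,x_N) = u_k(x_1,x_2)$ and $\tilde v_k(x_1,\dots,x_N) = v_k(x_1,x_2)$; since the extra variables do not enter, $\Delta_{\R^N}\tilde u_k = \Delta_{\R^2} u_k$ and the system \eqref{model} is still satisfied, with positivity preserved. Finally, geometric distinctness: property $(iii)$ together with $(ii)$ forces the nodal set $\{\tilde u_k = \tilde v_k\}$ to be the union $\bigcup_{i=0}^{k-1} \ell_i \times \R^{N-2}$ of $k$ hyperplanes (as remarked right after the theorem statement for the planar case), so different values of $k$ give solutions whose nodal sets are not related by any rigid motion of $\R^N$; in particular no $(u_k,v_k)$ coincides with a constant solution or with any $(u_j,v_j)$, $j \ne k$, even after translation or rotation. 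This gives infinitely many geometrically distinct non-constant solutions.

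The only genuine computation is the evaluation of $\inf_{s\in[0,1]}W(s,s)$; everything else is bookkeeping. I do not anticipate a real obstacle — the main point is simply that the reduction of \eqref{cond on critical} to an elementary one-variable minimization reproduces precisely the threshold $\Lambda = 1$, which is what makes the Corollary a clean consequence of the theorem; the trivial dimensional extension and the nodal-set argument for distinctness are routine.
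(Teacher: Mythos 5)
Your proposal is correct and follows essentially the same route as the paper: verify the hypotheses of Theorem~\ref{thm: ex 2d} for $f(t)=t-t^3$, $p=1$, $M=1$, and reduce condition \eqref{cond on critical} to the explicit minimization $\inf_{s\in[0,1]}W(s,s)=\frac{\Lambda}{2(1+\Lambda)}>\frac14=F(0)$, i.e.\ $\Lambda>1$. The extra details you supply --- the substitution $\sigma=s^2$, the trivial cylindrical extension to $\R^N$, and the nodal-set argument for geometric distinctness --- are all consistent with remarks the paper makes elsewhere and merely make explicit what the paper leaves implicit.
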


The corollary follows from the theorem, observing that if $f(s) = s- s^3$ and $p=1$, then
\[
F(s) = \frac{(1-s^2)^2}{4}, \quad W(s,t) = F(s) + F(t) + \frac{\Lambda s^2 t^2}{2};
\]
thus, condition \eqref{cond on critical} is satisfied if and only if
\[
\inf_{s \in [0,1]} W(s,s) = W\left( \frac{1}{\sqrt{1+\Lambda}}, \frac{1}{\sqrt{1+\Lambda}} \right) = \frac{\Lambda}{2(1+\Lambda)}  > \frac14 = F(0),
\]
that is, if and only if $\Lambda>1$. Notice that, if \eqref{cond on critical} is violated, we have have non-existence of non-constant solutions \cite{FSS,S}, and hence \eqref{cond on critical} is sharp in this case.

The proof of Theorem \ref{thm: ex 2d} consists in a $2$ steps procedure. At first, we construct a solution to \eqref{system} in a ball $B_R$ with the desired symmetry properties, combining variational methods with an auxiliary parabolic problem. In a second step, we pass to the limit as $R \to +\infty$, obtaining convergence to an entire solution of \eqref{system}. Assumption \eqref{cond on critical} enters in this second step in order to rule out the possibility that the limit profile $(u,v)$ is a pair with $v=u$, with $u$ possibly a constant. Roughly speaking, \eqref{cond on critical} makes the coexistence of $u$ and $v$ in the same region unfavorable with respect to the segregation, from the variational point of view. 

This kind of construction is inspired by \cite{BTWW, SZe, SZm}, where an analogue strategy was used to prove existence of solutions to
\begin{equation}\label{har sys}
\Delta u =  uv^2, \quad \Delta v = u^2 v, \quad u,v>0 \ \text{in $\R^N$}.
\end{equation}
With respect to \cite{BTWW, SZm, SZe}, however, the method has to be substantially modified. Solutions to \eqref{har sys} ``look like" harmonic function in the same way as solutions to \eqref{system} ``look like" solutions to the Allen-Cahn equation. Therefore, tools related with harmonic functions such as monotonicity formulae and blow-up analysis, which were crucially used in \cite{BTWW, SZm, SZe}, are not available in our context, and have to be replaced by a direct inspection of the variational background. In such an inspection it emerges the role of the competition parameter $\Lambda$, which is not present in \eqref{har sys} (of course, $\Lambda$ could be added in front of the coupling on the right hand side in \eqref{har sys}; but it could be absorbed with a scaling, and hence it would not play any role).

\begin{remark}
Let us consider the scalar equation $\Delta w + f(w) = 0$. The existence of a \emph{saddle-type} (or pizza) solution $w_k$ with the properties 
\begin{itemize}
\item[($i$)] $-M<w_k<M$ in $\R^2$; 
\item[($ii$)] $w_k \circ T_i= -w_k$ for every $i=1,\dots,k$, and $w_k(x_1,-x_2) = w_k(x_1,x_2)$ in $\R^2$; 
\item[($iii$)] $w_k>0$ in $\cS_k$. 
\end{itemize}
was established by Alessio, Calamai and Montecchiari in \cite{AlCaMo}, under slightly stronger assumption on $F$ with respect to those considered here. In some sense, $u_k-v_k$ looks like $w_k$, since they share the same symmetry properties, and for this reason we can call $(u_k,v_k)$ saddle-type (or pizza) solution.

Our method for Theorem \ref{thm: ex 2d} can be easily adapted also in the scalar case, giving an alternative proof for the existence result in \cite{AlCaMo}. For the sake of completeness, we present the details in the appendix of this paper. The main advantage is that our construction easily gives the following energy estimate
\begin{equation}\label{en est unk}
\int_{B_R} \left( \frac{1}{2} |\nabla w_k|^2 + F(w_k) \right) \le C R,
\end{equation}
with $C>0$ depending only on $k$, but not on $R$. Such estimate seems to be unknown, expect for the case $k=2$, where it was proved in \cite{CaTe}.
\end{remark}


Theorem \ref{thm: ex 2d} establishes the existence of infinitely many positive solutions to \eqref{system} in the plane. These can be regarded as solutions also in higher dimension $N \ge 3$, but it is natural to ask whether there exist solutions to \eqref{system} in $\R^N$ not coming from solutions in $\R^{N-1}$. We can give a positive answer to this question in any even dimension. Let $N=2m$, and let us consider the \emph{Simons cone}
\[
\cC= \left\{x \in \R^{2m}: \ x_1^2 + \cdots+x_m^2 = x_{m+1}^2 + \cdots + x_{2m}^2\right\}.
\]
We define two radial variables $s$ and $t$ by
\begin{equation}\label{rad var}
s= \sqrt{x_1^2 + \cdots + x_m^2} \ge 0, \qquad   t= \sqrt{x_{m+1}^2 + \cdots + x_{2m}^2} \ge 0.
\end{equation}
\begin{theorem}[Saddle solutions in $\R^{2m}$]\label{thm: ex saddle}
Let $m \ge 2$ be a positive integer, $p \ge 1$, $f \in C^{0,1}(\R)$ be odd, and suppose that its primitive $F$ satisfies \eqref{hp bistable}. Suppose moreover that \eqref{cond on critical} holds. Then, for every positive integer $m$, there exists a positive solution $(u,v)$ to system \eqref{system} in $\R^{2m}$ having the following properties:
\begin{itemize}
\item[($i$)] $0 < u,v<M$ in $\R^{2m}$; 
\item[($ii$)] $v(s,t)  = u(t,s)$;
\item[($iii$)] $u-v>0$ in $\cO =\{s>t\}$. 
\end{itemize}
\end{theorem}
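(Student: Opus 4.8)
\textbf{Proof proposal for Theorem \ref{thm: ex saddle}.}

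The plan is to follow the same two-step scheme used for Theorem \ref{thm: ex 2d}, replacing the dihedral symmetry of the plane by the symmetry group naturally attached to the Simons cone. First I would work in the ball $B_R \subset \R^{2m}$ and look for a solution $(u_R, v_R)$ in the class of pairs that depend only on the radial variables $(s,t)$ from \eqref{rad var} and satisfy the reflection constraint $v_R(s,t) = u_R(t,s)$. Concretely, one minimizes (or runs the auxiliary parabolic flow of) the energy
\[
J_R(u,v) = \int_{B_R} \left( \frac12 |\nabla u|^2 + \frac12 |\nabla v|^2 + W(u,v) \right)
\]
over the $H^1_0(B_R)$-pairs with values in $[0,M]^2$, restricted to the symmetric class and to the constraint $u \ge v$ on the half-ball $\cO \cap B_R = \{s > t\} \cap B_R$, with the natural boundary-type condition $u = v$ on the trace of $\cC$. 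As in the planar case, the auxiliary parabolic problem is used to produce a nontrivial solution of the elliptic system that honours the ordering $u_R \ge v_R$ in $\cO$ without it collapsing to $u_R = v_R$; the maximum principle / moving-plane-type argument across $\cC$ upgrades this to strict inequality $u_R > v_R$ in $\cO \cap B_R$, and the strong maximum principle gives $0 < u_R, v_R < M$. The point here is purely that the group generated by the $O(m) \times O(m)$ action and the swap $(x', x'') \mapsto (x'', x')$ preserves both the system and the cone, so all the planar constructions go through verbatim with $\ell_0$ replaced by $\cC$ and $\cS_k$ replaced by $\cO$.

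Second I would pass to the limit $R \to +\infty$. Interior elliptic estimates (the right-hand sides are bounded since $0 \le u_R, v_R \le M$) give, up to a subsequence, local $C^2_{\loc}(\R^{2m})$ convergence to an entire solution $(u,v)$ of \eqref{system}, still symmetric under the cone's symmetry group, still satisfying $0 \le u, v \le M$, $u \ge v$ in $\cO$, and $u(s,t) = v(t,s)$. The strong maximum principle then forces either $0 < u, v < M$ with $u > v$ in $\cO$, which is the desired conclusion, or a degenerate alternative: $u \equiv v$ everywhere (possibly both constant, or both equal to a common solution of $-\Delta w = f(w) - \Lambda w^{2p+1}$). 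Ruling out this degenerate alternative is exactly where hypothesis \eqref{cond on critical} must be invoked, through a lower bound on $J_R(u_R, v_R)$ — more precisely, a comparison showing that the symmetric competitor $u_R = v_R$ carries strictly more energy per unit volume near $\cC$ than the segregated configuration, because on the diagonal $W(s,s) \ge \inf_{[0,M]} W(s,s) > F(0)$ while a segregated profile pays only the one-well cost $F(0)$ along the interface. This forces the limit to be genuinely two-component.

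The main obstacle, as the authors themselves flag, is that in the Simons-cone setting one does not have the Allen–Cahn monotonicity formula or the clean blow-up analysis that were available for the harmonic system \eqref{har sys}; the non-degeneracy of the limit must be extracted directly from the variational structure. Concretely, the delicate part will be producing a uniform-in-$R$ energy lower bound on a suitable region adjacent to $\cC$ that survives the limit — one needs to show the minimal/parabolic solution $(u_R,v_R)$ cannot be $C^1$-close to a diagonal profile on a ball of fixed size, and this is where \eqref{cond on critical} is used as a strict gap rather than a soft inequality. Once the limit is nondegenerate, strictness of $u > v$ in $\cO = \{s > t\}$ and the bound $u, v < M$ follow from the strong maximum principle and the Hopf lemma applied across $\cC$, exactly as in Theorem \ref{thm: ex 2d}, so the real content of the proof is confined to the non-degeneracy step. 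I would also note that, unlike the planar case where distinctness of the $(u_k, v_k)$ was automatic from the nodal set $\bigcup \ell_i$, here there is a single solution per even dimension, so no such bookkeeping is needed; the symmetry class and the Simons cone itself certify that $(u,v)$ is not a solution lifted from $\R^{2m-1}$.
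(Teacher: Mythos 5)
Your overall scaffold (minimize/flow in a symmetric class in $B_R$, pass to the limit, rule out the diagonal $u\equiv v$ via \eqref{cond on critical}) is the right shape, but two things would prevent the argument from closing as written, and both of them are exactly what the paper's construction is built around.

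First, you take the admissible class to be $H^1_0(B_R)$-pairs, i.e.\ zero boundary data on $\pa B_R$. The paper instead prescribes $(u,v)=(w_m^+,w_m^-)$ on $\pa B_R$, where $w_m$ is the \emph{scalar} Cabr\'e--Terra saddle solution of $\Delta w + f(w)=0$ in $\R^{2m}$, odd across the Simons cone $\cC$, with $|w_m|<M$. With zero boundary data it is not clear your minimizer is nontrivial, and more importantly the competitor you would need in the non-degeneracy step (see below) is not admissible, because it does not vanish on $\pa B_R$. The boundary data $(w_m^\pm)$ is not a cosmetic choice: it is what forces the approximating solutions to stay separated and ``of saddle type'' in a quantitative sense.

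Second, and this is the real missing idea: you do not identify where the upper bound on the energy comes from. The mechanism is the Cabr\'e--Terra energy estimate $\int_{B_R}\bigl(\tfrac12|\nabla w_m|^2+F(w_m)\bigr)\le C R^{2m-1}$ (formula (1.15) in \cite{CaTe}), i.e.\ the scalar saddle solution has only \emph{surface-order} energy. Using the interpolated competitor $(\varphi_R,\psi_R)=(\xi w_m^+ + (1-\xi)u_R,\ \xi w_m^- + (1-\xi)v_R)$, which is admissible precisely because of the boundary condition above, minimality gives
\[
J\bigl((u_R,v_R),B_\rho\bigr)\le E(w_m,B_{\rho-1}) + F(0)\,|B_{\rho-1}| + C\rho^{2m-1} \le C\rho^{2m-1} + F(0)\,|B_{\rho-1}|,
\]
where the $F(0)|B_{\rho-1}|$ term appears because $W(w_m^+,w_m^-)=F(w_m)+F(0)$. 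If $u\equiv v$ on $\cO$ (hence everywhere, by symmetry) the energy is bounded below by $\min_{[0,M]}W(\sigma,\sigma)\,|B_\rho|$, and \eqref{cond on critical} makes the volume coefficients incompatible for large $\rho$. Your sketch instead speaks of a ``per unit volume near $\cC$'' comparison and of the solution not being $C^1$-close to a diagonal profile on a fixed ball — but the actual contradiction is a \emph{global} volume-versus-surface scaling argument over $B_\rho$ as $\rho\to\infty$, and it only works once you have both the $w_m^\pm$ boundary data and the $R^{2m-1}$ energy bound for $w_m$. Without naming $w_m$ and its energy estimate as the building block, the crucial upper bound has no source, which is why you end up (correctly) flagging the non-degeneracy step as the unresolved difficulty.

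A smaller point: the paper does \emph{not} re-run the full parabolic machinery in $\R^{2m}$ — it simply cites the analogue of Lemma \ref{lem: existence in balls} and then exploits \eqref{E saddle} to make the limit argument \emph{simpler} than in the plane, not harder. Your remark that the Simons-cone setting lacks monotonicity formulas is correct, but the paper already circumvents that through the explicit competitor built from $w_m$, so there is no residual analytic obstacle left once the right boundary data and the right comparison profile are chosen.
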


Notice that $\{u-v=0\} = \cC$, and that $(u,v)$ looks like the saddle solution of the scalar Allen-Cahn equation found in \cite{CaTe}. The strategy of the proof is the same as the one of Theorem \ref{thm: ex saddle}. However, the proof of Theorem \ref{thm: ex saddle} is a bit simpler, since we can take advantage of an energy estimate like \eqref{en est unk}, which is known to hold for saddle solutions in $\R^{2m}$ (see formula (1.15) in \cite{CaTe}) but, as already observed, was unknown for saddle-type solutions in the plane.

\medskip

\noindent \textbf{Structure of the paper.} In Section \ref{sec:2d} we prove Theorem \ref{thm: ex 2d}. In Section \ref{sec:high} we prove Theorem \ref{thm: ex saddle}. In the appendix, we give an alternative proof with respect to \cite{AlCaMo} of the existence of saddle-type solutions for the scalar equation in the plane, yielding to the energy estimate \eqref{en est unk}.

\section{Saddle-type solutions for bistable systems in the plane}\label{sec:2d}

In this section we prove Theorem \ref{thm: ex 2d}. The existence of a solution in the whole plane $\R^2$ will be obtained by approximation with solutions in $B_R$.

Throughout this section, the positive integer $k$ (index of symmetry) will always be fixed, and hence the dependence of the quantities with respect to $k$ will often be omitted.

In the sector $\cS = \cS_k$, we define
\[
w_k = \min\left\{M, \frac{\alpha x_1- |x_2|}{\sqrt{2}}\right\},
\]
where $\alpha = \alpha_k = \tan\left(\pi/(2k)\right)$. Notice that $w_k > 0$ in $\cS_k$ and $w_k = 0$ on $\pa \cS_k$. Thus, we can extend $w_k$ in the whole of $\R^2$ by iterated odd reflections with respect to the lines $\ell_i$. In this way, we obtain a function, still denoted by $w_k$, defined in $\R^2$, with 
\begin{itemize}
\item[($i$)] $-M \le w_k  \le M$ in $\R^2$; 
\item[($ii$)] $w_k \circ T_i= -w_k$ for every $i=1,\dots,k$, and $w_k(x_1,-x_2) = w_k(x_1,x_2)$ in $\R^2$; 
\item[($iii$)] $w_k>0$ in $\cS_k$, 
\end{itemize}
that is, $w_k$ has the same symmetry properties of the saddle-type solutions in \cite{AlCaMo}.

Now, for any $\Omega \subset \R^2$ open, and for every $(u,v) \in H^1(\Omega, \R^2)$, we introduce the functional
\begin{equation}\label{def J}
J((u,v),\Omega):= \int_{\Omega} \left( \frac{1}{2} |\nabla u|^2 + \frac{1}{2} |\nabla v|^2 + W(u,v)  \right).
\end{equation}
Moreover, for $R>0$, we let $\cS_R = \cS_k \cap B_R$ and consider the set
\[
A_R:= \left\{ (u,v)\in H^1(B_R, \R^2)\left|\begin{array}{l} (u,v) = (w_k^+,w_k^-) \text{ on $\pa B_R$}, \ 0 \le u,v \le M \quad \text{in $B_R$} \\
v  = u \circ T_i \text{ for every $i=1,\dots,k$}, \\
u(x_1,-x_2) = u(x_1,x_2) \text{ in $B_R$}, \ u \ge v \quad \text{in $\cS_R$}, \ 
\end{array}\right.\right\}.
\]
Notice that $A_R \neq \emptyset$, since for instance $(w_k^+, w_k^-) \in A_R$.
\begin{lem}\label{lem: existence in balls}
For every $R>0$, there exists a solution $(u_R,v_R) \in A_R$ to 
\begin{equation}\label{k sys ball 2}
\begin{cases}
-\Delta u =f(u)-\Lambda  |u|^{p-1} u |v|^{p+1}  & \text{in}\quad B_R   \\
-\Delta v =f(v)-\Lambda  |u|^{p+1} |v|^{p-1} v & \text{in}\quad B_R \\
u = w_k^+, \quad v = w_k^- & \text{on $\pa B_R$}.
\end{cases}
\end{equation}
\end{lem}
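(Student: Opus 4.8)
The plan is to obtain $(u_R, v_R)$ as a minimizer of the functional $J(\cdot, B_R)$ over the closed convex-like set $A_R$, and then to show that the constraints in the definition of $A_R$ are not active, so that the minimizer solves the Euler–Lagrange system \eqref{k sys ball 2}. First I would check that $A_R$ is weakly closed in $H^1(B_R, \R^2)$: the pointwise bounds $0 \le u, v \le M$, the symmetry identities $v = u \circ T_i$ and $u(x_1, -x_2) = u(x_1, x_2)$, and the inequality $u \ge v$ in $\cS_R$ are all preserved under weak $H^1$ convergence (the linear constraints pass to the limit, and the pointwise inequalities pass to a.e. limits along a subsequence). Coercivity is immediate because $W \ge 0$ and the boundary datum $(w_k^+, w_k^-)$ is fixed, so the Dirichlet integral controls the $H^1$ norm on $A_R$; together with weak lower semicontinuity of $J$ (convexity in the gradient, continuity and boundedness of $W$ on the relevant range) this gives a minimizer $(u_R, v_R) \in A_R$ by the direct method.

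The main work is to upgrade the minimizer to a solution of \eqref{k sys ball 2}. Because of the symmetry constraints it suffices to work in the fundamental sector $\cS_R$: a function on $\cS_R$ with the appropriate boundary behaviour on the lines $\ell_i$ extends by the prescribed reflections to an element of $A_R$, and $J$ over $B_R$ is $2k$ times the contribution of $\cS_R$. So the problem reduces to minimizing the sector energy over pairs $(u,v)$ on $\cS_R$ with $u \ge v$ in $\cS_R$ and with the boundary conditions $u = w_k^+$, $v = w_k^-$ on $\pa B_R \cap \overline{\cS_R}$ and the matching conditions on the two radial segments of $\pa \cS_R$ forced by the reflections (on these segments $w_k = 0$, so $u = v$ there). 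The only genuine inequality constraint is then $u \ge v$ inside $\cS_R$. I would handle it with an obstacle-problem / variational-inequality argument: the first variation of the sector energy in admissible directions gives
\[
\int_{\cS_R} \big( \nabla u \cdot \nabla \varphi + \nabla v \cdot \nabla \psi \big) + \int_{\cS_R} \big( W_s(u,v)\varphi + W_t(u,v)\psi \big) \ \ge\ 0
\]
for all $(\varphi, \psi)$ tangent to the constraint set, from which one reads off that $u$ and $v$ individually satisfy the correct equations on the open set $\{u > v\}$, while on the coincidence set $\{u = v\}$ one only gets one-sided information for the combination. The key point is to show the coincidence set has empty interior, or more precisely that the obstacle is never actually touched from a "wrong" side; this is where I expect the real obstacle to lie.

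To kill the constraint I would argue as follows. On $\{u = v\}$ the pair solves, in the variational-inequality sense, a problem whose natural "free" equation for the difference $d := u - v \ge 0$ is, after subtracting the two equations, $-\Delta d = f(u) - f(v) - \Lambda\big(|u|^{p-1}u\,|v|^{p+1} - |u|^{p+1}|v|^{p-1}\big)$; on the set $\{u=v\}$ the right-hand side vanishes, so $d$ is superharmonic there (the variational inequality gives $-\Delta d \ge 0$ as a measure supported on the coincidence set), $d \ge 0$, and $d = 0$ on part of the boundary and $d > 0$ elsewhere on $\pa B_R \cap \cS_R$. A strong maximum principle / Hopf-type argument (using that $d$ is a nonnegative supersolution of a linear equation with bounded coefficients, since $f$ is locally Lipschitz and the coupling terms are $C^1$ on $[0,M]^2$) then forces either $d \equiv 0$ in $\cS_R$ — which is incompatible with the boundary datum $w_k \not\equiv 0$ — or $d > 0$ in the interior of $\cS_R$, i.e. the constraint $u \ge v$ is inactive inside $\cS_R$. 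In the latter case the first variation holds with equality for all test functions supported in $\cS_R$, so $(u_R, v_R)$ solves both equations of \eqref{k sys ball 2} classically (elliptic regularity bootstraps $H^1 \to C^{2,\alpha}_{\loc}$), and the reflection symmetry then propagates the equations across the lines $\ell_i$ to all of $B_R$; the boundary condition on $\pa B_R$ holds by construction. The delicate points I anticipate are (a) justifying that the reflected function is genuinely $H^1$ across the $\ell_i$ (true because $u = v$ there, so the odd-type reflection defining $v$ from $u$ matches continuously and the traces agree), and (b) the maximum principle step, where one must make sure the linearized operator for $d$ really has bounded zeroth-order coefficient — this uses $p \ge 1$ so that $|u|^{p-1}u$ is $C^1$ and the difference quotient of the nonlinear coupling is bounded on $[0,M]^2$. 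Alternatively, and perhaps more robustly, one can avoid the obstacle analysis by the auxiliary parabolic flow alluded to in the introduction: run the gradient flow of $J$ starting from $(w_k^+, w_k^-)$, observe that the order relation $u \ge v$ in $\cS_R$ and the symmetries are preserved by the flow (by comparison for the parabolic system, which is quasi-monotone in the relevant variables), and pass to the limit in time to a stationary solution lying in $A_R$ and solving \eqref{k sys ball 2}; I would mention this as the route actually taken, since it sidesteps the subtlety of the coincidence set.
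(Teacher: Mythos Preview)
Your proposal correctly identifies the first step (direct minimization over $A_R$) and is aware that the paper uses a parabolic flow. However, your primary argument follows a different route (an obstacle/variational-inequality analysis), and the parabolic sketch you append at the end differs in a crucial way from the paper's actual mechanism.

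On the obstacle route: the analysis is more delicate than you indicate, because $A_R$ carries \emph{several} inequality constraints ($0 \le u$, $u \le M$, $0 \le v$, $v \le M$, and $u \ge v$ in $\cS_R$), not only $u \ge v$. The upper bounds can be removed by truncation since $F \ge 0 = F(M)$, but the lower bounds $u,v \ge 0$ cannot: nothing in the hypotheses forces $F(s) \ge F(0)$ for $s<0$, so truncating at $0$ may raise the energy, and a separate maximum-principle argument is needed for each of these obstacles. The interaction between the several obstacles, together with the reflection conditions on $\partial \cS_R$ needed to propagate the equations across the lines $\ell_i$ to all of $B_R$, makes this route workable but noticeably heavier than you suggest.

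On the parabolic route: the paper does \emph{not} start the flow from $(w_k^+,w_k^-)$ and pass to a long-time limit, which would require an additional convergence argument for the flow. Instead, it first minimizes $J(\cdot,B_R)$ over $A_R$, and then runs the parabolic problem \emph{starting from the minimizer} $(u_R,v_R)$. After checking that $A_R$ is positively invariant under the flow (each constraint is handled by a simple parabolic comparison) and that $t \mapsto J((U(t,\cdot),V(t,\cdot)),B_R)$ is non-increasing, minimality forces the energy to be constant in time, whence $U_t \equiv V_t \equiv 0$ and the minimizer is already a stationary solution of \eqref{k sys ball 2}. This ``minimize first, then flow from the minimizer'' trick is the core of the paper's proof; it converts all the inequality constraints into invariance statements for the parabolic problem and bypasses both the obstacle analysis and any large-time asymptotics.
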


\begin{proof}
The proof of the lemma is inspired by \cite[Theorem 4.1]{BTWW}. Since the weak convergence in $H^1$ implies the almost everywhere convergence, up to a subsequence, the set $A_R$ is weakly closed in $H^1$. Moreover, the functional $J(\cdot\,, B_R)$ is clearly bounded from below and weakly lower semi-continuous. Therefore, there exists a minimizer $(u_R,v_R)$ of $J(\cdot\,, B_R)$ in $A_R$. To show that such a minimizer is a solution to \eqref{k sys ball}, we consider the auxiliary parabolic problem
\begin{equation}\label{parabolic pb}
\begin{cases}
\pa_t U - \Delta U = \tilde f(U) - \Lambda |U|^{p-1} U |V|^{p+1} & \text{in $(0,+\infty) \times B_R$} \\
\pa_t V - \Delta V = \tilde f(V) - \Lambda |U|^{p+1}  |V|^{p-1} V & \text{in $(0,+\infty) \times B_R$} \\
U = w_k^+, \quad V=w_k^- & \text{on $(0,+\infty) \times \pa B_R$} \\
(U(0, \cdot), V(0,\cdot)) \in A_R,
\end{cases}
\end{equation}
where $\tilde f: \R \to \R$ is a globally Lipschitz continuous odd function such that $\tilde f(s) = f(s)$ for $s \in [-M-1,M+1]$. The existence and uniqueness of a local solution, defined on a maximal time interval $[0,T)$, follow by standard parabolic theory. Notice that
\[
\pa_t U - \Delta U = c_1(t,x) U,
\]
for
\[
c_1(t,x) = \begin{cases} - \Lambda |U(t,x)|^{p-1}  |V(t,x)|^{p+1} + \frac{\tilde f(U(t,x))}{U(t,x)}  & \text{if $U(t,x) \neq 0$} \\
0 & \text{if $U(t,x) = 0$}. \end{cases}
\]
Since $\tilde f(0)=0$, we have that $c_1$ is bounded from above by the Lipschitz constant $L$ of $\tilde f$, and it is not difficult to check that $U(t,\cdot) \ge 0$ in $B_R$ for every $t \in [0,T)$: indeed, taking into account the boundary conditions,
\begin{align*}
\frac{d}{dt} \left( \frac{1}{2} \int_{B_R} (U^-)^2 \right) &= - \int_{B_R} U^- (\pa_t U) = - \int_{B_R} U^- (\Delta U + c_1(t,x) U) \\
& \le  -\int_{B_R} |\nabla U^-|^2 + L \int_{B_R} (U^-)^2 \le L \int_{B_R} (U^-)^2,
\end{align*}
whence it follows that
\[
\frac{d}{dt} \left( e^{-2Lt} \int_{B_R} (U^-)^2 \right) \le 0.
\]
Therefore, the non-negativity of $U(t, \cdot)$ for $t \in (0,T)$ follows from the non-negativity of $U(0,\cdot)$. The same argument also shows that $V(t,\cdot) \ge 0$ for every such $t$. Using the positivity of $U$, it is not difficult to prove that $U$ is also uniformly bounded from above: since $\tilde f(M) = 0$, we have 
\[
\pa_t (M-U) - \Delta (M-U) \ge c_2(t,x) (M-U), 
\]
where $c_2$ is the bounded function
\[
c_2(t,x) = \begin{cases} \frac{\tilde f(U(t,x))-\tilde f(M)}{U(t,x)-M}  & \text{if $U(t,x) \neq 0$} \\
0 & \text{if $U(t,x) = M$}, \end{cases}
\]
and the same argument used above implies that $0 \le U \le M$ on $(0,T) \times B_R$. Similarly, $0 \le V \le M$. As a consequence, the solution $(U,V)$ can be globally continued in time on $(0,+\infty)$. Furthermore, in \eqref{parabolic pb} we can replace $\tilde f$ with $f$, since they coincide on $[-M-1,M+1]$.

We also observe that, since $U$ is constant in time on $\pa B_R$, the energy of the solution is non-increasing:
\begin{equation}\label{diss}
\begin{split}
\frac{d}{dt} J( (U(t,\cdot), V(t,\cdot)); B_R) & = \int_{B_R}   \nabla U \cdot \nabla U_t + \nabla V \cdot \nabla V_t  + \pa_1 W(U,V) U_{t} + \pa_2 W(U,V) V_t  \\
& = \int_{B_R}  \left( -\Delta U + \pa_1 W(U,V)\right) U_{t} +  \left( -\Delta V + \pa_2 W(U,V)\right) V_{t}  \\
& = -  \int_{B_R} U_{t}^2 + V_t^2 \le 0.
\end{split}
\end{equation}

As in \cite{BTWW}, we can now show that $A_R$ is positively invariant under the parabolic flow. Let $(U,V)$ be a solution with initial datum in $A_R$. By the symmetry of \eqref{parabolic pb}, we have that $(V(t, T_i x), U(t,T_i x))$ is still a solution. By the symmetry of initial and boundary data, and by uniqueness, such solution must coincide with $(U(t,\cdot), V(t,\cdot))$. This means in particular that $V(t,x) = U(t, T_i x)$. Likewise, $U(t,x_1,-x_2) = U(t, x_1,x_2)$. Notice that the symmetries imply that $U-V = 0$ on $\pa \cS_k$. Thus, recalling that $\cS_R = \cS_k \cap B_R$, we have
\begin{equation}\label{pb in S}
\begin{cases}
\pa_t (U-V) - \Delta (U-V) = c(t,x) (U-V) & \text{in $(0,+\infty) \times \cS_R$} \\
U-V \ge 0 & \text{on $(0,+\infty) \times \pa \cS_R$} \\
U-V \ge 0 & \text{on $\{0\} \times \cS_R$},
\end{cases}
\end{equation}
where the bounded function $c$ is defined by
\[
c(t,x) = \begin{cases}  
\frac{f(U(t,x))-f(V(t,x))}{U(t,x)- V(t,x)} + \Lambda U^p(t,x) V^p(t,x) & \text{if } U(t,x) \neq V(t,x) \\
\Lambda U^p(t,x) V^p(t,x) & \text{if } U(t,x) = V(t,x).
\end{cases}
\]
The parabolic maximum principle implies that $U \ge V$ in $\cS_R$ globally in time, and, in turn, this gives the invariance of $A_R$.

At this point we consider the solution $(U_R, V_R)$ to \eqref{parabolic pb} with initial datum $(u_R, v_R)$, minimizer of $J(\cdot\,, B_R)$ in $A_R$. By minimality in $A_R$ and by \eqref{diss}, we have 
\[
J((u_R, v_R); B_R) \le J ((U_R(t,\cdot), V_R(t,\cdot)); B_R) \le J((u_R, v_R); B_R) \quad \implies \quad U_{t}^2 + V_t^2 \equiv 0,
\]
and hence $U_R \equiv u_R$ and $V_R \equiv v_R$. But then $(u_R(x), v_R(x))$ is a (stationary) solution of the parabolic problem \eqref{parabolic pb}, that is, it solves the stationary problem \eqref{k sys ball 2}, and in addition $(u_R, v_R) \in A_R$. This completes the proof.
\end{proof}

We are ready to complete the:

\begin{proof}[Proof of Theorem \ref{thm: ex 2d}]
First, of all, we discuss the convergence of $\{(u_R, v_R): R >1\}$. Let $\rho >1$. Since $0 \le u_R, v_R \le M$, we have that 
\[
|\Delta u_R(x)|  \le \max_{s \in [0,M]} |f(s)| + \Lambda M^{2p+1}, \quad |\Delta v_R(x)|  \le \max_{s \in [0,M]} |f(s)| + \Lambda M^{2p+1}.
\]
Thus interior $L^p$ estimates (see e.g. \cite[Chapter 9]{GiTr}), applied in balls of radius $2$ centered in points of $\overline{B_\rho}$ with $p>N$, and the Morrey embedding theorem, imply that there exists $C>0$ depending only on $M$ and $\Lambda$ (but independent of $R$ and $\rho$) such that 
\begin{equation}\label{grad est}
\|u_R\|_{C^{1,\alpha}(\overline{B_\rho})} + \|v_R\|_{C^{1,\alpha}(\overline{B_\rho})}  \le C \qquad \text{in $B_\rho$, for all $R>\rho+2$}
\end{equation}
(for every $0<\alpha<1$). By the Ascoli-Arzel\`a theorem, up to a subsequence $\{(u_R, v_R)\}$ converges in $C^{1,\alpha}(\overline{B_\rho})$ to a solution in $B_\rho$, for every $0<\alpha<1$. Taking a sequence $\rho \to +\infty$, a diagonal selection finally gives $(u_R, v_R) \to (u,v)$ in $C^{1,\alpha}_{\loc}(\R^2)$, up to a subsequence, with $(u,v)$ solution to
\[
\begin{cases}
-\Delta u = f(u) -\Lambda u^p v^{p+1} & \text{in $\R^2$} \\ 
-\Delta v = f(v) -\Lambda u^{p+1} v^{p} & \text{in $\R^2$} \\ 
0 \le u,v \le M & \text{in $\R^2$}.
\end{cases}
\]
Notice that, by convergence, $(u,v)$ satisfies the symmetry property ($ii$) in Theorem \ref{thm: ex 2d}, and moreover $u-v \ge 0$ in $\cS_k$. As in \eqref{pb in S}, for any $\rho>0$
\[
\begin{cases}
-\Delta (u-v) = c(x) (u-v) & \text{in $\cS_\rho$} \\
u-v \ge 0 & \text{in $\cS_\rho$},
\end{cases}
\]
for a bounded function $c$. Thus, the strong maximum principle implies that either $u > v$ in $\cS_\rho$, of $u \equiv v$ in $\cS_\rho$. Since $\rho>0$ is arbitrarily chosen, we have that either $u > v$ in $\cS$, of $u \equiv v$ in $\cS$. We claim that the latter alternative cannot take place. To prove this claim, we use a comparison argument similar as the one by Cabr\'e and Terra in \cite{CaTe} for the construction of the saddle solution for scalar bystable equations. First of all, we observe that, by symmetry, any function in $A_R$ is determined only by its restriction on $\cS_k$. Thus the minimality of $(u_R, v_R)$ can be read as
\[
J((u_R,v_R), \cS_R) \le J((u,v), \cS_R) \qquad \forall (u,v) \in A_R.
\]
Let $1<\rho<R-2$, and let $\xi$ be a radial smooth cut-off function with $\xi \equiv 1$ in $B_{\rho-1}$, $\xi \equiv 0$ in $B_\rho^c$, $0 \le \xi \le 1$. We define
\[
\varphi_R(x) = \xi (x) w_k^+(x)   +   (1-\xi(x)) u_R(x) = \xi (x) \min\left\{M, \frac{\alpha x_1- |x_2|}{\sqrt{2}}\right\} +   (1-\xi(x)) u_R(x),
\]
and
\[ 
\psi_R(x) = \xi(x) w_k^-(x) +(1-\xi (x)) v_R(x) = (1-\xi (x)) v_R(x),
\]
where we recall that $\alpha= \tan\left(\pi/(2k)\right)$. It is immediate to verify that $(\varphi_R,\psi_R)$ is an admissible competitor for $(u_R, v_R)$ on $\cS_R$. Moreover, by \eqref{grad est}, there exists $C>0$ such that
\begin{equation}\label{gradient bound 2}
\|\varphi_R\|_{W^{1,\infty}(B_{\rho})} + \|\psi_R\|_{W^{1,\infty}(B_{\rho})} \le C \qquad \forall R >\rho+2.
\end{equation}
By minimality
\[
J((u_R, v_R), \cS_R) \le J((\varphi_R,\psi_R), \cS_R),
\]
and since $(\varphi_R, \psi_R) = (u_R, v_R)$ in $\cS_R \setminus \cS_\rho$ we deduce that
\begin{equation}\label{1561}
J((u_R, v_R), \cS_\rho) \le J((\varphi_R,\psi_R), \cS_\rho) \le J((\varphi_R, 0), \cS_{\rho-1}) + C |\cS_{\rho} \setminus \cS_{\rho-1}| 
\end{equation}
where we used the global boundedness of $\{(\varphi_R, \psi_R)\}$ in $W^{1,\infty}(B_\rho)$, see \eqref{gradient bound 2}. The last term can be easily computed as
\[
|\cS_{\rho} \setminus \cS_{\rho -1}| = \frac{\pi}{k} ( \rho^2 - (\rho-1)^2)  \le \frac{2\pi}{k} \rho.
\]
For the first term, recalling that $F(M) = 0$, $\xi \equiv 1$ in $B_{\rho-1}$, and $w_k>0$ in $\cS_k$, we have
\begin{align*}
\int_{\cS_{\rho-1}} & \left( \frac{1}{2} |\nabla \varphi_R|^2 + F(\varphi_R) + F(0) \right) \\
& \qquad = \int_{\cS_{\rho-1} \cap \left\{\alpha x_1-|x_2| < \sqrt{2} M \right\}} \left( \frac{1}{2} |\nabla w_k|^2 + F(w_k)\right) + \int_{\cS_{\rho-1}} F(0) \\
& \qquad \le C |\cS_{\rho-1} \cap \left\{\alpha x_1-|x_2| < \sqrt{2} M \right\}| + F(0) |\cS_{\rho-1}|. 
\end{align*}
The set
\[
\cS_k \cap \left\{\alpha x_1-|x_2| < \sqrt{2} M \right\} 
\]
is contained in the (non-disjoint) union of the two strips
\[
\left\{\alpha x_1 - \sqrt{2}M < x_2 < \alpha x_1, \ x_1>0\right\}  \cup \left\{-\alpha x_1 < x_2 < 2 \sqrt{M} -\alpha x_1, \ x_1>0\right\} =S_1 \cup S_2.
\]
Therefore, 
\begin{align*}
 |\cS_{\rho-1} \cap \left\{\alpha x_1-|x_2| < \sqrt{2} M \right\}| &\le  |S_1 \cap \{0<x_1<\rho\}| + |S_2 \cap \{0<x_1<\rho\}| \\  
 & = 2 \int_0^\rho \left( \int_{-\sqrt{2} M + \alpha x_1}^{\alpha x_1} 1 \, dx_2 \right) dx_1 = 2 \sqrt{2} M\rho.
\end{align*}
Coming back to \eqref{1561}, we conclude that there exists a constant $C>0$ such that, for every $\rho >1$ and $R>\rho+2$,
\[
J((u_R, v_R), \cS_\rho)  \le C \rho + F(0) |\cS_{\rho-1}| 
\]
for every $1<\rho<R-2$, where $C>0$ is a positive constant independent of both $\rho$ and $R$. Passing to the limit as $R \to +\infty$, we infer by $C^1_{\loc}$-convergence that
\begin{equation}\label{energy estimate}
J((u,v), \cS_\rho) \le C \rho + F(0) |\cS_{\rho-1}| 
\end{equation}
for every $\rho>1$. Notice that, in this estimate, the leading term as $\rho \to +\infty$ is 
\[
F(0) |\cS_{\rho-1}| \sim \frac{\pi}{k} F(0) \rho^2.
\]
Suppose now by contradiction that $u \equiv v$ in $\cS_k$. Recalling that $0\le u,v \le M$, we have that 
\begin{align*}
J((u,v), \cS_\rho) &= \int_{\cS_\rho} |\nabla u|^2 + W(u,u) \ge \int_{\cS_\rho} \min_{s \in [0,M]} W(s,s) \\
& = \min_{s \in [0,M]} W(s,s) |\cS_{\rho}| \sim \frac{\pi}k \min_{s \in [0,M]} W(s,s) \rho^2
\end{align*}
as $\rho \to +\infty$. Comparing with \eqref{energy estimate}, we obtain a contradiction for large $\rho$, thanks to assumption \eqref{cond on critical}. Therefore, $u >v$ in $\cS_k$. Since $u = v$ on $\pa \cS_k$, we also infer that both $u$ and $v$ cannot be constant. The maximum principle implies then that $u, v >0$ in $\R^2$, and from this it is not difficult to deduce that $u, v <M$: indeed, if $u(x_0) = M$, then $x_0$ is a strict maximum point for $u$ with 
\[
\Delta u(x_0) = -f(M) + \Lambda M^p v(x_0)^{p+1} = \Lambda M^p v(x_0)^{p+1} >0,
\]
which is not possible. This completes the proof.
\end{proof}

\section{Existence of saddle solutions in higher dimension}\label{sec:high}

The proof of Theorem \ref{thm: ex saddle} follows the same strategy as the one of Theorem \ref{thm: ex 2d}, being actually a bit simpler. Let $m \ge 2$ be a positive integer. By \cite[Theorem 1.3]{CaTe}\footnote{For the existence and the energy estimate in the theorem, it is sufficient that $f$ is locally Lipschitz, rather than $C^1$}, under our assumption \eqref{hp bistable} on $F$ the Allen-Cahn equation $\Delta w + f(w) = 0$ in $\R^{2m}$ admits a saddle solution $w_m$, that is a solution satisfying: 
\begin{itemize}
\item[($i$)] $w$ depends only on the variables $s$ and $t$ defined in \eqref{rad var}; 
\item[($ii$)] $w_m(s,t)  = -w_m(t,s)$;
\item[($iii$)] $w_m>0$ in $\cO =\{s>t\}$. 
\end{itemize}
In addition, $|w_m|<M$ in $\R^{2m}$, and
\begin{equation}\label{E saddle}
\int_{B_R} \frac{1}{2} |\nabla w_m|^2 + F(w_m) \le C R^{2m-1} \qquad \text{for all $R>1$},
\end{equation} 

Now, as in Section \ref{sec:2d}, we consider the energy functional $J((u,v), \Omega)$ defined in \eqref{def J} (in this section $\Omega \subset \R^{2m}$), and the set
\[
A_R:= \left\{ (u,v)\in H^1(B_R, \R^2)\left|\begin{array}{l} (u,v) = (w_m^+,w_m^-) \text{ on $\pa B_R$}, \\
v(s,t)  = u(t,s),\\
u \ge v \quad \text{in $\cO_R$}, \ 0 \le u,v \le M \quad \text{in $B_R$}\\
\end{array}\right.\right\},
\]
where $\cO_R = \cO \cap B_R$.
\begin{lem}\label{lem: existence in balls m}
For every $R>0$, there exists a solution $(u_R,v_R) \in A_R$ to 
\begin{equation}\label{k sys ball}
\begin{cases}
-\Delta u =f(u)-\Lambda  |u|^{p-1} u |v|^{p+1}  & \text{in}\quad B_R   \\
-\Delta v =f(v)-\Lambda  |u|^{p+1} |v|^{p-1} v & \text{in}\quad B_R \\
u = w_m^+, \quad v = w_m^- & \text{on $\pa B_R$}.
\end{cases}
\end{equation}
\end{lem}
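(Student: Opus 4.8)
The plan is to argue exactly along the lines of the proof of Lemma~\ref{lem: existence in balls}, the only structural change being that the dihedral symmetry group generated by the reflections $T_i$ is replaced by the group generated by the reflection $\sigma$ exchanging the two blocks of coordinates $(x_1,\dots,x_m)$ and $(x_{m+1},\dots,x_{2m})$ (so that $s$ and $t$ are swapped), together with the $O(m)\times O(m)$ action forcing every admissible pair to depend only on $(s,t)$. First I would check that $A_R$ is a nonempty, weakly closed subset of $H^1(B_R,\R^2)$: it contains $(w_m^+,w_m^-)$, since $w_m>0$ in $\cO$ gives $w_m^+\ge w_m^-$ in $\cO_R$ and $w_m(s,t)=-w_m(t,s)$ gives $w_m^+(s,t)=w_m^-(t,s)$; moreover weak $H^1$ convergence passes to an a.e.\ convergent subsequence, which preserves the pointwise bounds $0\le u,v\le M$, the inequality $u\ge v$ on $\cO_R$, the identity $v(s,t)=u(t,s)$, and the $O(m)\times O(m)$ invariance. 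Since $J(\cdot\,,B_R)$ is bounded below and weakly lower semicontinuous, a minimizer $(u_R,v_R)$ of $J(\cdot\,,B_R)$ over $A_R$ exists by the direct method.

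To show that this minimizer solves \eqref{k sys ball}, I would introduce, exactly as in \eqref{parabolic pb}, the auxiliary parabolic system on $(0,+\infty)\times B_R$ for a globally Lipschitz odd truncation $\tilde f$ of $f$ (with $\tilde f=f$ on $[-M-1,M+1]$), with boundary data $(w_m^+,w_m^-)$ and initial datum in $A_R$; local existence and uniqueness of a solution $(U,V)$ on a maximal interval $[0,T)$ are standard. Repeating verbatim the Gronwall-type estimates of Lemma~\ref{lem: existence in balls} — which use only $\tilde f(0)=\tilde f(M)=0$, the time-independence of the boundary data, and the boundedness of the relevant zeroth-order coefficients — gives $0\le U,V\le M$ on $[0,T)\times B_R$, hence global existence in time and the possibility of replacing $\tilde f$ by $f$. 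Since $U$ and $V$ are constant in time on $\pa B_R$, the dissipation identity $\frac{d}{dt}J((U(t,\cdot),V(t,\cdot)),B_R)=-\int_{B_R}(U_t^2+V_t^2)\le0$ holds as in \eqref{diss}.

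The key step is the positive invariance of $A_R$ under this flow. By the symmetry of the system and of the data, $(V(t,\sigma x),U(t,\sigma x))$ solves the same parabolic problem with the same initial and boundary conditions, so by uniqueness $V(t,x)=U(t,\sigma x)$; likewise the $O(m)\times O(m)$ invariance of the data propagates, so $U(t,\cdot)$ and $V(t,\cdot)$ depend only on $(s,t)$. These symmetries force $U-V=0$ on the Simons cone $\cC$, while on $\pa B_R\cap\overline{\cO}$ one has $U-V=w_m^+-w_m^-=w_m\ge0$; since $\pa\cO_R\subset(\cC\cap\overline{B_R})\cup(\pa B_R\cap\overline{\cO})$, the function $U-V$ satisfies in $\cO_R$ a linear parabolic equation with bounded zeroth-order coefficient, just as in \eqref{pb in S}, with nonnegative initial and boundary data. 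The parabolic maximum principle then gives $U\ge V$ in $\cO_R$ for all $t\ge0$, so $(U(t,\cdot),V(t,\cdot))\in A_R$ for every $t\ge0$.

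Finally, running the flow from the minimizer $(u_R,v_R)\in A_R$, invariance keeps the trajectory in $A_R$, so minimality together with the dissipation identity forces $J$ to be constant in time; hence $U_t\equiv V_t\equiv0$ and $(u_R,v_R)$ is a stationary solution, i.e.\ it solves \eqref{k sys ball} and belongs to $A_R$. I expect the only point requiring some care to be the application of the parabolic maximum principle on $\cO_R$, which fails to be a Lipschitz domain at the origin, where $\cC$ is singular; since $\{0\}$ has codimension $2m\ge 4$ this causes no trouble, in the same way as the corner of the circular sector $\cS_R$ at the origin was harmless in the planar argument. No genuinely new obstacle should arise compared with Lemma~\ref{lem: existence in balls}.
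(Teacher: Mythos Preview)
Your proposal is correct and follows precisely the approach the paper intends: the paper omits the proof entirely, stating only that it is analogous to that of Lemma~\ref{lem: existence in balls}, and your write-up supplies exactly that analogy, with the dihedral reflections $T_i$ replaced by the involution $\sigma$ swapping the two blocks of coordinates (together with the $O(m)\times O(m)$ invariance implicit in the notation $v(s,t)=u(t,s)$). No step differs in substance from the planar case, and your remark on the harmless singularity of $\cO_R$ at the origin is a reasonable note of caution but not an actual obstacle.
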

The proof is analogue to the one of Lemma \ref{lem: existence in balls}, and is omitted.

\begin{proof}[Proof of Theorem \ref{thm: ex saddle}]
As in the $2$-dimensional case, we can prove that up to a subsequence $(u_R, v_R) \to (u,v)$ in $C^{1,\alpha}_{\loc}(\R^2)$ as $R \to +\infty$, with $(u,v)$ solution to
\[
\begin{cases}
-\Delta u = f(u) -\Lambda u^p v^{p+1} \\ 
-\Delta v = f(v) -\Lambda u^{p+1} v^{p} \\
0 \le u,v \le M
\end{cases} \quad \text{in $\R^{2m}$}.
\]
By convergence, $v(s,t) = u(t,s)$, $u-v \ge 0$ in $\cO$, and $0 \le u,v \le M$ in $\R^{2m}$. Also, for every $\rho>0$
\[
\begin{cases}
-\Delta (u-v) = c(x) (u-v) & \text{in $\cO_\rho$} \\
u-v \ge 0 & \text{in $\cO_\rho$},
\end{cases}
\]
for a bounded function $c$. Thus, the strong maximum principle implies that either $u > v$ in $\cO$, of $u \equiv v$ in $\cO$. We claim that the latter alternative cannot take place. Let $1<\rho<R-2$, and let $\xi$ be a radial smooth cut-off function with $\xi \equiv 1$ in $B_{\rho-1}$, $\xi \equiv 0$ in $B_\rho^c$, $0 \le \xi \le 1$. We define 
\[
\varphi_R = \xi w_m^+ + (1-\xi) u_R, \quad \psi_R = \xi w_m^- + (1-\xi) v_R.
\]
This is an admissible competitor in $A_R$, which coincides with $(u_R, v_R)$ on $B_R \setminus B_\rho$. Therefore, by minimality and recalling \eqref{E saddle}, we have
\begin{equation}
\begin{split}
J((u_R, v_R), B_\rho) &\le J((w_m^+,w_m^-), B_{\rho-1}) + C |B_{\rho} \setminus  B_{\rho-1}| \\
& \le E(w_m, B_{\rho-1}) + \int_{B_{\rho-1}} F(0) + C \rho^{2m-1} \le C \rho^{2m-1} + F(0) |B_{\rho-1}|.
\end{split}
\end{equation}
If, by contradiction, $u \equiv v$ in $\cO$, then we have that 
\[
J((u,v), B_\rho) = \int_{B_\rho} |\nabla u|^2 + W(u,u) \ge \int_{B_\rho} \min_{\sigma \in [0,M]} W(\sigma,\sigma) = \min_{\sigma \in [0,M]} W(\sigma,\sigma) |B_{\rho}|. 
\]
Comparing with \eqref{energy estimate}, we obtain a contradiction for large $\rho$, thanks to assumption \eqref{cond on critical}. Thus, $u>v$ in $\cO$, and the conclusion of the proof is straightforward.
\end{proof}

\appendix

\section{Alternative construction of saddle-type planar solutions}

In this appendix we consider the scalar Allen-Cahn equation
\begin{equation}\label{scalar}
-\Delta w   = f(w) \qquad \text{in $\R^2$},
\end{equation}
and we prove the following result:
\begin{theorem}\label{thm: ex scal}
Let $f \in C^{0,\alpha}(\R)$ be odd, and suppose that its anti-primitive $F$ satisfies \eqref{hp bistable}. Then, for every $k \in \N$, there exists a solution $w_k$ having the following properties:
\begin{itemize}
\item[($i$)] $-M<w_k<M$ in $\R^2$; 
\item[($ii$)] $w_k \circ T_i= -w_k$ for every $i=1,\dots,k$, and $w_k(x_1,-x_2) = w_k(x_1,x_2)$ in $\R^2$; 
\item[($iii$)] $w_k>0$ in $\cS_k$. 
\end{itemize}
Moreover, there exists a constant $C>0$ (possibly depending on $k$) such that
\begin{equation}\label{energy estimate scalar}
\int_{B_R} \left( \frac12 |\nabla w|^2 + F(w) \right) \le C R \qquad \text{for every $R>0$}.
\end{equation}
\end{theorem}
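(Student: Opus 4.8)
The plan is to repeat, in a simplified form, the two-step scheme of Section~\ref{sec:2d}. Fix $k$, let $w_k$ be the piecewise-linear building block $\min\{M,(\alpha_k x_1-|x_2|)/\sqrt2\}$ on $\cS_k$ extended by iterated odd reflections across the lines $\ell_i$ (so $w_k$ satisfies (ii) and $-M\le w_k\le M$), write $E(w,\Omega)=\int_\Omega\bigl(\tfrac12|\nabla w|^2+F(w)\bigr)$, set $\cS_R=\cS_k\cap B_R$, and for $R>1$ consider
\[
A_R=\bigl\{w\in H^1(B_R):\ w=w_k\ \text{on }\partial B_R,\ |w|\le M\ \text{in }B_R,\ w\circ T_i=-w\ (i=1,\dots,k),\ w(x_1,-x_2)=w(x_1,x_2),\ w\ge0\ \text{in }\cS_R\bigr\}.
\]
Since $w_k\in A_R$ this class is nonempty; it is convex and weakly closed, and $E(\cdot,B_R)$ is coercive and weakly lower semicontinuous, so a minimizer $w_R$ exists. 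To see that $w_R$ solves $-\Delta w_R=f(w_R)$ in $B_R$ I would argue as in Lemma~\ref{lem: existence in balls}: run the gradient flow $\partial_t W-\Delta W=\tilde f(W)$ from $w_R$, with $\tilde f$ a globally Lipschitz odd truncation of $f$; the set $\{|W|\le M\}$ is invariant because $\tilde f(\pm M)=0$, the symmetry constraints are invariant by uniqueness, and $W\ge 0$ in $\cS_R$ is invariant because the symmetries force $W=0$ on $\partial\cS_R\cap B_R$ and $W=w_k^+\ge0$ on $\partial B_R\cap\overline{\cS_R}$ while $W$ solves a linear parabolic equation with bounded zero-order coefficient inside $\cS_R$; hence $A_R$ is positively invariant, $E$ is nonincreasing along the flow, and minimality forces $W$ stationary, so $w_R$ solves the equation. (Since $f(0)=f(M)=0$, the obstacles $0$ and $M$ are themselves equilibria; so, more in the spirit of the weaker regularity $f\in C^{0,\alpha}$, one may instead reduce by symmetry to a double obstacle problem on $\cS_R$ and invoke standard obstacle-problem regularity.) Elliptic regularity then gives $w_R\in C^{2,\alpha}(B_R)$, and reflection across the $\ell_i$ shows $w_R$ solves the equation throughout $B_R$.

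Next I would pass to the limit $R\to\infty$. From $|w_R|\le M$ we get $|\Delta w_R|\le\max_{[-M,M]}|f|$, so interior $L^p$ and Schauder estimates bound $\{w_R\}$ in $C^{2,\alpha}_{\loc}(\R^2)$ independently of $R$, and a diagonal subsequence converges in $C^2_{\loc}$ to a solution $w$ of $-\Delta w=f(w)$ in $\R^2$ with $|w|\le M$, satisfying (ii) and $w\ge0$ in $\cS_k$. For the energy bound I would use the competitor from the proof of Theorem~\ref{thm: ex 2d}: for $1<\rho<R-2$ and a radial cutoff $\xi$ equal to $1$ on $B_{\rho-1}$ and to $0$ outside $B_\rho$, the function $\varphi_R=\xi w_k+(1-\xi)w_R$ lies in $A_R$, coincides with $w_R$ outside $B_\rho$, and is bounded in $W^{1,\infty}(B_\rho)$ uniformly in $R$ by the elliptic estimates, so minimality yields
\[
E(w_R,B_\rho)\le E(\varphi_R,B_\rho)\le E(w_k,B_{\rho-1})+C\,|B_\rho\setminus B_{\rho-1}|.
\]
Here the scalar case is genuinely cheaper than the system: $w_k$ equals $\pm M$ outside a union of strips of width $O_k(1)$ around the lines $\ell_i$ (as computed in the proof of Theorem~\ref{thm: ex 2d}), and $F(\pm M)=0$, so $E(w_k,B_{\rho-1})\le C\rho$; together with $|B_\rho\setminus B_{\rho-1}|\le C\rho$ this gives $E(w_R,B_\rho)\le C\rho$ uniformly in $R$, and the $C^1_{\loc}$ convergence yields $E(w,B_R)\le CR$ for every $R>0$, which is \eqref{energy estimate scalar}.

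Finally I would rule out the degenerate alternatives. Inside $\cS_\rho$ the nonnegative function $w$ solves $-\Delta w=f(w)$, and the strong maximum principle gives either $w>0$ in $\cS_k$ or $w\equiv0$ in $\cS_k$; in the latter case the odd reflections force $w\equiv0$ in $\R^2$, whence $E(w,B_\rho)=F(0)\,|B_\rho|\sim F(0)\pi\rho^2$, contradicting $E(w,B_\rho)\le C\rho$ for large $\rho$ since $F(0)>0$ by \eqref{hp bistable}. Thus $w>0$ in $\cS_k$; the symmetries give $w=0$ on each $\ell_i$, so $w$ is nonconstant, and applying the strong maximum principle to $M\mp w\ge0$ gives $-M<w<M$ in $\R^2$. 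This establishes (i)--(iii) together with \eqref{energy estimate scalar}. The step I expect to require the most care, as opposed to mechanical transcription from Section~\ref{sec:2d}, is showing that the constrained minimizer $w_R$ satisfies the \emph{unconstrained} equation, i.e.\ that the obstacle $\{w=0\}$ does not reach the interior of $\cS_R$; this is exactly what the parabolic flow (or, equivalently here, the fact that $0$ and $M$ are equilibria of the nonlinearity) takes care of. The pleasant feature, absent in the system, is that the pizza competitor carries only $O(R)$ energy, which simultaneously produces the estimate \eqref{energy estimate scalar} and makes condition \eqref{cond on critical} unnecessary.
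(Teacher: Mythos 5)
Your proof is correct, but it takes a genuinely heavier route than the paper's on the one step the paper itself flags as the point of the appendix. You reproduce, in the scalar setting, the full machinery of the vectorial Lemma~\ref{lem: existence in balls}: a sign-constrained admissible class $A_R\subset H^1(B_R)$, a constrained minimizer, and a parabolic gradient flow (with the globally Lipschitz truncation $\tilde f$, invariance of $\{|W|\le M\}$, invariance of $\{W\ge0 \text{ in }\cS_R\}$ via the parabolic maximum principle, and energy dissipation) to conclude that the constrained minimizer solves the unconstrained equation. The paper's Lemma~\ref{lem: sc 1} avoids the flow entirely by a scalar-specific trick: minimize $E(\cdot,\cS_R)$ over the class $H_R$ of $H^1_0(\cS_R)$ functions even in $x_2$, \emph{with no sign or $L^\infty$ constraint}. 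Since $f$ is odd, $F$ is even, so $E(|w|,\cS_R)=E(w,\cS_R)$; and truncation at level $M$ does not increase the energy by \eqref{hp bistable}. Hence one may take a minimizer with $0\le w_R\le M$ which is automatically a \emph{free} critical point of $E$ on $H_R$, solving the Euler--Lagrange equation on $\cS_R$ by the principle of symmetric criticality, and is then extended to $B_R\setminus\{0\}$ by iterated odd reflection across the $\ell_i$ (the origin is handled by a cutoff argument). This is precisely what the remark preceding the proof identifies as ``the main simplification with respect to the proof of Theorem~\ref{thm: ex 2d}'': dealing with a single equation, one does not need the auxiliary parabolic problem at all. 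Your parenthetical suggestion (reduce to a double obstacle problem using $f(0)=f(M)=0$) is closer in spirit but still heavier than the $|w|$-trick. Everything after the construction in the ball — the uniform interior elliptic estimates, the diagonal subsequence, the pizza competitor $\varphi_R=\xi w_k+(1-\xi)w_R$ and the resulting $O(\rho)$ bound, the $O(\rho)$ vs.\ $F(0)\pi\rho^2$ comparison ruling out $w\equiv0$, and the strong maximum principle for (i) and (iii) — matches the paper's argument.
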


\begin{remark}
The existence of a solution $w_k$ with the properties ($i$)-($iii$) was established by Alessio, Calamai and Montecchiari in \cite{AlCaMo}. In \cite{AlCaMo} the authors also obtained a more precise description of the asymptotic behavior of $w_k$ at infinity. On the other hand, the validity of the estimate \eqref{energy estimate scalar} was unknown.

In order to show that $w_k$ fulfills \eqref{energy estimate scalar}, we provide an alternative existence proof with respect to the one in \cite{AlCaMo}. It is tempting to conjecture that the solutions given by Theorem \ref{thm: ex scal}, and those found in \cite{AlCaMo}, coincide.

Our alternative proof is strongly inspired by \cite{CaTe}, where Cabr\'e and Terra proved existence of solutions in $\R^{2m}$ to \eqref{scalar} vanishing on the Simon's cone (when restricted to the case $m=1$ - i.e., when we consider \eqref{scalar} in the plane - their result establishes the existence of the solution $w_2$). We first prove the existence of a solution $w_R=w_{k,R}$ to \eqref{scalar} in the ball $B_R$, for every $R>0$, by variational argument. Passing in a suitable way to the limit as $R \to +\infty$, we shall obtain a solution in the whole plane $\R^2$ having the desired energy estimate. 

The main simplification with respect to the proof of Theorem \ref{thm: ex 2d} stays in the fact that, dealing with a single equation, we will not need an auxiliary parabolic problem, but we will be able to prove the existence of a solution in $B_R$ with the desired symmetry properties directly by variational methods.
\end{remark}

The proof of Theorem \ref{thm: ex scal} takes the rest of this appendix. Let us fix $k$. For any $\Omega \subset \R^2$ open, and for every $w \in H^1(\Omega)$, we define 
\[
E(w,\Omega):= \int_{\Omega} \left( \frac{1}{2} |\nabla w|^2 + F(w) \right).
\]
For $R>0$, we consider $\cS_{R} := B_R \cap \cS_k$ and the set
\[
H_R:= \left\{ w \in H_0^1(S_R): w(x_1,-x_2) = w(x_1,x_2) \ \text{a.e. in $\cS_R$}\right\}.
\]

\begin{lem}\label{lem: sc 1}
For every $R>0$, problem 
\begin{equation}\label{aux 2}
\begin{cases}
-\Delta w = f(w) & \text{in $B_R$} \\
w(x_1,-x_2) = w(x_1,x_2) & \text{in $B_R$} \\
w = 0 & \text{on $\pa B_R$},
\end{cases}
\end{equation}
has a solution $w_R$, satisfying ($ii$) in Theorem \ref{thm: ex scal}. Moreover, $- M \le w_R \le M$ in $B_R$, $w_R \ge 0$ in $\cS_R$, and 
\[
E(w_R, S_R) = \min\{E(w,S_R): \ w \in H_R\}.
\]
\end{lem}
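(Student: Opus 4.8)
The plan is to minimize the energy $E(\cdot, S_R)$ over the symmetric class $H_R$, show the minimizer exists by the direct method, then derive the Euler--Lagrange equation in $B_R$ by a reflection argument, and finally obtain the pointwise bounds $-M \le w_R \le M$ and $w_R \ge 0$ in $\cS_R$ via truncation.

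First I would set up the direct method. The functional $E(w, S_R) = \int_{S_R} (\tfrac12 |\nabla w|^2 + F(w))$ is coercive on $H_0^1(S_R)$ because $F \ge 0$ by \eqref{hp bistable}, and it is weakly lower semicontinuous: the gradient term is convex, and the zeroth-order term $\int_{S_R} F(w)$ passes to the limit along a weakly convergent (hence, up to subsequence, a.e.\ convergent) sequence by Fatou, using $F \ge 0$. The constraint set $H_R$ is a closed linear subspace of $H_0^1(S_R)$, hence weakly closed. Therefore a minimizer $w_R \in H_R$ exists. I would then observe that we may assume $w_R \ge 0$ on $\cS_R$: replacing $w_R$ by $|w_R|$ stays in $H_R$, does not increase $\int |\nabla w_R|^2$, and leaves $\int F(w_R)$ unchanged since $F$ is even (because $f$ is odd). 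Similarly, truncating at levels $\pm M$ does not increase the energy: if $w_R > M$ somewhere, replacing $w_R$ by $\min\{w_R, M\}$ decreases the Dirichlet energy strictly on that set while $F(\min\{w_R,M\}) \le F(w_R)$ there — this uses that $F$ is nonincreasing on $[M,\infty)$, which follows from $F \ge 0 = F(M)$ and $F' = -f$ with $f \ge 0$ on $[M, \infty)$... more carefully, from $F(M)=0$ being the minimum of $F$ on $[M,\infty)$ by \eqref{hp bistable}. Hence $-M \le w_R \le M$.

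Next I would derive the equation. A priori $w_R$ minimizes only among \emph{symmetric} competitors, so the Euler--Lagrange equation holds only tested against functions in $H_R$, giving $-\Delta w_R = f(w_R)$ weakly in the interior of $S_R$ (where the symmetry constraint $w(x_1,-x_2)=w(x_1,x_2)$ is no restriction on compactly supported variations, since any $\phi \in C_c^\infty$ can be symmetrized and the symmetric part of the variation captures the full equation by the symmetry of $w_R$). To promote this to a solution on the full ball $B_R$, I would extend $w_R$ from $\cS_R$ to $B_R$ by iterated odd reflections across the lines $\ell_1, \dots, \ell_{k-1}$ and across the $x_1$-axis, exactly as the function $w_k$ is built from the sector in Section \ref{sec:2d}. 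Because $w_R = 0$ on the two straight edges $\pa \cS_k \cap B_R$ of the sector and is even across the $x_1$-axis, the reflected function is continuous and lies in $H^1(B_R)$ with $w_R = 0$ on $\pa B_R$; by odd reflection and the oddness of $f$, it is a weak solution of $-\Delta w_R = f(w_R)$ across each reflection line. (The standard removable-singularity/reflection argument for weak solutions: the weak formulation across $\ell_i$ follows because the normal derivative traces match up to sign and $f$ is odd.) Interior elliptic regularity then gives $w_R \in C^{2,\alpha}_{\loc}$, so $w_R$ is a classical solution of \eqref{aux 2}, and it inherits property ($ii$) of Theorem \ref{thm: ex scal} by construction.

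The main obstacle is the reflection/gluing step: one must check carefully that the odd extension across the sector edges produces a genuine $H^1$ weak solution on all of $B_R$ — i.e., that no spurious distributional term supported on the lines $\ell_i$ appears. This is where oddness of $f$ and the Dirichlet condition $w_R = 0$ on the edges are both essential, and it is the technical heart of the argument; the compactness and truncation steps are routine. Finally, the identity $E(w_R, S_R) = \min\{E(w, S_R): w \in H_R\}$ is just the definition of $w_R$ as the minimizer, recorded for use in the subsequent passage to the limit $R \to \infty$.
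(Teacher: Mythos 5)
Your approach is the same as the paper's: minimize $E(\cdot,\cS_R)$ over the symmetric class $H_R$ by the direct method, use $|w_R|$ and truncation at level $M$ together with $F(\pm M)=0\le F$ to get $0 \le w_R \le M$ on $\cS_R$, invoke the principle of symmetric criticality to obtain the Euler--Lagrange equation in the full sector (the paper cites \cite[Theorem~1.28]{Wi} rather than spelling out the symmetrization of test functions), and then build the solution on $B_R$ by iterated odd reflection across the sector edges. The minimality identity $E(w_R,\cS_R)=\min_{H_R}E$ is, as you say, just the construction.

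There is, however, one genuine gap. After reflecting across the sector edges you only obtain a weak solution on $B_R\setminus\{0\}$: all the reflection lines meet at the origin, and the odd-reflection principle, which needs $w_R$ to be $C^1$ up to a flat portion of the boundary, gives nothing in a neighborhood of that corner. You flag the possibility of spurious distributional terms along the lines $\ell_i$, but that is not where the difficulty lies (the lines are handled by the standard odd-reflection argument once one notes, as the paper does, that $w_R\in C^1$ up to the edges away from the origin by elliptic regularity); the potential defect is a singular distribution concentrated at $\{0\}$. The paper removes it explicitly: test with $\varphi\eta_\delta$, where $\eta_\delta\equiv0$ on $B_\delta$, $\eta_\delta\equiv1$ outside $B_{2\delta}$, and $|\nabla\eta_\delta|\le C/\delta$, and let $\delta\to0$; the cross term $\int\varphi\,\nabla w_R\cdot\nabla\eta_\delta$ is controlled by $C\|\nabla w_R\|_{L^2(B_{2\delta})}\to0$ because a point has zero $H^1$-capacity in dimension $2$. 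Your proof should include this (or an equivalent removable-singularity) step. A minor imprecision in addition: you speak of reflecting ``across the $x_1$-axis,'' but that axis lies in the interior of $\cS_k$ and the minimizer is already even across it by membership in $H_R$; the reflections used for the extension are only the odd ones across the two bounding rays of the sector and their iterates.
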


\begin{proof}
At first, we search a solution to the auxiliary problem
\begin{equation}\label{aux 1}
\begin{cases}
-\Delta w = f(w) & \text{in $\cS_R$} \\
w \in H_0^1(S_R), \quad w \ge 0 & \text{in $\cS_R$} \\
w(x_1,x_2) = w(x_1,-x_2) & \text{in $\cS_R$},
\end{cases}
\end{equation}
by minimizing the function $E(w,\cS_R)$ in $H$. The existence of a minimizer follows easily by the direct method of the calculus of variations. Since $E(w,\cS_R) = E(|w|,\cS_R)$, it is not restrictive to suppose that $w_R \ge 0$. Also, since $E( \min\{w,M\},\cS_R) \le E(w,\cS_R)$ by assumption \eqref{hp bistable}, we can suppose that $w_R \le M$. Clearly, $w_R$ solves the first equation in \eqref{aux 1} in the set $\cS_R \setminus \{\theta=0\}$. The fact that $w_R$ is also a solution across $\cS_R \cap \{\theta=0\}$ (thus a solution in $\cS_R$) follows by the principle of symmetric criticality (see e.g. \cite[Theorem 1.28]{Wi} for a simple proof of this result, sufficient to our purposes). 

Notice that $w_R \in C^1(\{0<r<R, -\pi/2k \le \theta \le \pi/2k\})$ by standard elliptic regularity. Thus, we can reflect $w_R$ $2k$ times in an odd way across $\ell_1,\dots,\ell_k$, obtaining a solution in $B_R \setminus \{0\}$. To see that $w_R$ is in fact a solution in $B_R$, we take a smooth function $\eta_\delta \in C^\infty(\overline{B_R})$ with $\eta_\delta  \equiv 0$ in $B_\delta$, $\eta_\delta \equiv 1$ in $B_{2\delta} \setminus B_\delta$, and $|\nabla \eta_\delta| \le C/\delta$ in $B_R$. Then, for every $\varphi \in C^\infty_c(B_R)$, we have
\[
\int_{B_R} \nabla w_R \cdot \nabla (\varphi \eta_\delta) - \int_{B_R} f(w_R) \varphi \eta_\delta = 0,
\]
since $\varphi \eta_\delta$ is an admissible test function in $B_R \setminus \{0\}$. Passing to the limit as $\delta \to 0^+$, we deduce that 
\[
\int_{B_R} \nabla w_R \cdot \nabla \varphi  - \int_{B_R} f(w_R) \varphi  = 0 \qquad \forall \varphi \in C^\infty_c(B_R),
\]
that is, $w_R$ is a weak solution to \eqref{scalar} in $B_R$. This completes the proof.
\end{proof}

\begin{proof}[Proof of Theorem \ref{thm: ex scal}]
We wish to pass to the limit as $R \to +\infty$ and obtain a solution in the whole plane $\R^2$ as limit of the family $\{w_R\}$. As in the previous sections, by elliptic estimates we have that, up to a subsequence $w_R \to w$ in $C^{1,\alpha}_{\loc}(\R^2)$ as $R \to \infty$, for every $\alpha \in (0,1)$.
The limit $w$ inherits by $w_R$ the symmetry property ($ii$) in Theorem \ref{thm: ex scal}. Moreover, $w \ge 0$ in the sector $\cS=\cS_k$, and $|w| \le M$ in the whole plane $\R^2$. 
Actually, the strict inequality $|w|<M$ holds, by the strong maximum principle. To complete the proof of the theorem, it remains then to show that $w$ satisfies estimate \eqref{energy estimate scalar}, and that $w \not \equiv 0$. 

As in the proof of Theorem \ref{thm: ex 2d}, $\{w_R\}$ has a uniform gradient bound: there exists $C>0$ (independent of $R$) such that
\begin{equation}\label{gradient bound}
\|\nabla w_R\|_{L^\infty(B_{R-1})} \le C \qquad \forall R >1.
\end{equation}
For an arbitrary $\rho>1$, let now $R>\rho + 2$, and let $\xi \in C^\infty_c(B_\rho)$, with $\xi \equiv 1$ in $B_{\rho -1}$. We consider the following competitor for $w_R$: 
\[
\varphi_R(x) = \xi(x) \min\left\{ \frac{\alpha x_1- |x_2|}{\sqrt{2}}, M \right\} +   (1-\xi(x)) w_R(x).
\]
Notice that this is the same type of competitor we used in Theorem \ref{thm: ex 2d}.
%
%
%
By minimality
\[
E(w_R, S_R) \le E(\varphi_R, S_R),
\]
and since $w_R = \varphi_R$ in $\cS_R \setminus \cS_\rho$ we deduce that
\begin{equation}\label{1561s}
\begin{split}
\int_{\cS_\rho} \left( \frac{1}{2} |\nabla w_R|^2 + F(w_R) \right) &\le \int_{\cS_\rho} \left( \frac{1}{2} |\nabla \varphi_R|^2 + F(\varphi_R) \right) \\
 & \le \int_{\cS_{\rho-1}} \left( \frac{1}{2} |\nabla \varphi_R|^2 + F(\varphi_R) \right) + C |\cS_{\rho} \setminus \cS_{\rho -1}|,
\end{split}
\end{equation}
where we used the global boundedness of $\{\varphi_R\}$ in $W^{1,\infty}(B_\rho)$, see \eqref{gradient bound}. At this point we can proceed as in the conclusion of the proof of Theorem \ref{thm: ex 2d}: the right hand side in \eqref{1561s} can be estimated by $C \rho$, with $C$ independent of $\rho$. Thus, we conclude that there exists a constant $C>0$ such that, for every $\rho >1$ and $R>\rho+2$,
\[
E(w_R, \cS_\rho) = \int_{S_\rho} \left( \frac{1}{2} |\nabla w_R|^2 + F(w_R) \right) \le C \rho,
\]
Passing to the limit as $R \to +\infty$, we infer by $C^1_{\loc}$-convergence that
\[
E(w, \cS_\rho) \le C \rho,
\]
which implies, by symmetry, the estimate \eqref{energy estimate scalar}.

Suppose finally that $w \equiv 0$. Then the energy estimate \eqref{energy estimate scalar} would give for every $\rho>1$
\[
\pi F(0) \rho^2 = E(0, B_\rho) \le C \rho,
\]
which is not possible if $\rho$ is sufficiently large. This proves that $w \not \equiv 0$, and completes the proof of Theorem \ref{thm: ex scal}.
\end{proof}


\end{document}